\documentclass[12pt]{article}

\usepackage{pgf,tikz}
\usepackage{mathrsfs}
\usetikzlibrary{arrows}

\usepackage{graphicx} %Loading the package
\graphicspath{{figures/}} %Setting the graphicspat

\usepackage{fullpage, url,amsmath,amsfonts,amssymb,mathtools,mathrsfs,graphicx,  algorithm, float, sansmath,epstopdf,color,caption,enumitem,tabularx}
\usepackage[final]{pdfpages}
\usepackage{amsthm}
\usetikzlibrary{automata,topaths}
\usetikzlibrary{decorations.pathreplacing,shapes.misc}
\usepackage{fancyhdr}

\def\beq{\begin{equation}}
\def\eeq{\end{equation}}
\def\baq{\begin{eqnarray}}
\def\eaq{\end{eqnarray}}
\def\baqn{\begin{eqnarray*}}
\def\eaqn{\end{eqnarray*}}

\newcommand{\ball}{\mathbb{B}}

\usepackage{multirow}
\usetikzlibrary{calc,arrows}
\theoremstyle{plain}
\newtheorem{definition}{Definition}
\newtheorem{remark}{Remark}

\newtheorem{theorem}{Theorem}
\newtheorem{lemma}[theorem]{Lemma}

\newtheorem{proposition}{Proposition}

\usepackage{blindtext}

\usepackage[colorlinks,linkcolor=blue,citecolor=red]{hyperref}

\usepackage{xcolor, framed}

\newcommand{\R}{{\mathbb R}}

\newcommand{\interior}{{\rm int}\kern 0.06em}

\def\<{\langle}
\def\>{\rangle}

\newcommand\dom{{\rm dom}}%

\newcommand{\gra }{\operatorname{gra}}
\newcommand{\ran }{\operatorname{ran}}
\newcommand{\zer}{\operatorname{zer}}

\usepackage{ulem}

\usepackage{subcaption}

\usepackage{pict2e}

\if
{

\usepackage[pageref]{backref}
\renewcommand*{\backrefalt}[4]{%
\ifcase #1 %
(Not cited)%
\or
(Cited on p.~#2)%
\else
(Cited on pp.~#2)%
\fi
}

}
\fi

\begin{document}
\title{Inexact Proximal Point and Tseng Algorithms with Nonsummable Errors to Solve Monotone Inclusions}
\author{Ba Khiet Le \thanks{Analytical and Algebraic Methods in Optimization Research Group, Faculty of Mathematics and Statistics, Ton Duc Thang University, Ho Chi Minh City, Vietnam.
 E-mail: \texttt{lebakhiet@tdtu.edu.vn}. Research of this author was  supported by National Foundation for Science and
Technology Development (NAFOSTED) of Vietnam under Grant Number
101.01-2025.61}  \qquad Boris S. Mordukhovich\thanks{Department of Mathematics and Center for Artificial Intelligence and Data Science, Wayne State University, Detroit, MI 48202, USA.
 E-mail: \texttt{aa1086@wayne.edu}. Research of this author was partly supported by the US National Science Foundation under grant DMS-2204519 and by the Australian Research Council under Discovery Project DP-190100555} \qquad Michel  A. Th\' era \thanks{l XLIM UMR-CNRS 7252 and Universit\'{e} de Limoges,   123 Avenue Albert Thomas,
87060 Limoges CEDEX, France.  E-mail: \texttt{michel.thera@unilim.fr} ORCID 0000-0001-9022-6406}}
%\date{}
\maketitle

%\begin{center}
%{\bf Dedicated to Simeon Reich in honor of his 80th birthday}
%\end{center}\vspace*{0.1in}

\begin{abstract}
In this paper, we establish, for the first time in the literature, the convergence of the practical versions of the Inexact Proximal Point Algorithm (IPPA) and the Inexact Tseng Algorithm (ITA) for computing approximate solutions to monotone inclusions in Hilbert spaces under the the presence of {\it nonsummable errors}. Our approach relies on Tikhonov regularization, the contraction property of the associated monotone operators, and the recently developed R-continuity theory. The proposed techniques and results can be extended to analyze various important inexact algorithms in optimization-related problems  with nonsummable errors.
\end{abstract}

{\bf Keywords.} Inexact Proximal Point Algorithm, Inexact Tseng Algorithm, nonsummable errors, approximate solutions, Tikhonov regularization, R-continuity.

{\bf AMS Subject Classification.}  65K05, 65K10, 49J52, 49J53.

\section{Introduction and Motivations}

This paper concerns the study of efficient algorithms {for solving} the {\it monotone inclusion} 
\begin{equation}\label{main}
0\in Ax,
\end{equation}
where $A:\mathcal{H}\rightrightarrows \mathcal{H}$ is a set-valued {\it maximally monotone} operator defined on a {\it Hilbert space} $\mathcal{H}$. Various algorithms for solving \eqref{main} and related optimization problems have been largely developed over the years; see, e.g.,  \cite{Bauschke,Nesterov,Reich,Rockafellar,tseng} and the references therein. Recent years have witnessed strong attention to {\it inexact} versions of such algorithms aiming to find approximate solutions under errors in computations. The reader can find various inexact algorithms of this type in \cite{Buscaglia,Butnariu,
dev-nest,KLMT,KMT24,KMT26,KMT,L1,LMT,LT,Rz1,Rz2} among other publications.

One of the most classical methods to solve the inclusion problem \eqref{main} is the {\it Proximal Point Algorithm} (\textbf{PPA}) proposed by Rockafellar \cite{Rockafellar} in the form
\begin{equation*}
x_{k+1}=J_{\gamma A}x_k,\qquad x_0\in \mathcal{H},
\end{equation*}
where $J_{\gamma A}:=(I+\gamma A)^{-1}$ denotes the {\it resolvent} of $\gamma A$. It is well known that $J_{\gamma A}$ is {\it firmly nonexpansive}; see, e.g., \cite{Bauschke,Bruck,Goebel,Nesterov,Rockafellar}.

In many practical situations, the resolvent operator cannot be computed exactly and must therefore be approximated numerically. This naturally leads to the {\it inexact iteration}
\begin{equation}\label{pperr}
x_{k+1}=J_{\gamma A}x_k+y_{k+1},\qquad x_0\in \mathcal{H},
\end{equation}
where $(y_k)$ represents an error sequence satisfying the estimate
$\|y_k\|\leq \delta$ with some prescribed tolerance $\delta>0$. This requires to seek constructive conditions ensuring the {\it convergence} of the sequence $(x_k)$ generated by \eqref{pperr}.

Most existing convergence results for inexact proximal methods assume that the error sequence is {\it summable}, i.e., they require that
\begin{equation*}
\sum_{k=1}^{\infty}\|y_k\|<\infty,
\end{equation*}
which implies, in particular, that $y_k\to 0$ as $k\to\infty$. While being mathematically convenient, this assumption is often difficult to guarantee in practical computations. By contrast, the {\it bounded-error condition}
\begin{equation}\label{error}
\|y_k\|\leq \delta\;\mbox{ for all }\;k=1,2,\ldots.
\end{equation}
is significantly more realistic from the computational viewpoint. Other commonly used relative error criteria employed, e.g., in  the aforementioned publications, may also be difficult to verify at each iteration.

Having the above {discussion} in mind, the {\it first goal} of this work is to investigate the {\it well-posedness} of \textbf{PPA} under the {{\it sole  assumption of boundedness}} on the errors \eqref{error}. 
Our approach is based on the {\it Tikhonov regularization} of \eqref{main}. More precisely, we consider the {\it Inexact Proximal Point Algorithm} (\textbf{IPPA}) defined by
\begin{equation*}\label{pperrr}
x_{k+1}
=J_{\gamma \widetilde{A}}x_k+y_{k+1}
=(I+\gamma \widetilde{A})^{-1}x_k+y_{k+1},
\end{equation*}
where $\widetilde{A}$ is an approximation of the operator $A$ given in the form
\[
\widetilde{A}:=A+\epsilon I,
\]
where $I$ is the identity operator, where $\epsilon,\delta>0$ are fixed parameters, and where the error sequence $(y_k)$ satisfies the estimate in \eqref{error}.

Under the assumption that $A^{-1}$ is {\it R-continuous} at the origin (see below), we establish the convergence of $(x_k)$ to an approximate solution of \eqref{main}. This result is particularly relevant in applications, where exact solutions are generally unattainable in finite precision computations, whereas approximate solutions are sufficient for most practical purposes. Our analysis further reveals that the well-posedness and stability of the \textbf{PPA} are essentially induced by the {\it strongly monotone regularization term} rather than by the classical summability assumption on the errors in the previous publications. {To the best of our knowledge, the first work allowing nonsummable but uniformly bounded errors under assumption \eqref{error} is due to  Zaslavski \cite{zas10}; see also \cite{zas12,zas20}. However, obtaining an approximate solution may require a very large number of iterations, see Remark \ref{rm1}-(iv).}

The {\it second goal} of this paper is the extension of the proposed approach to solving the {\it composite monotone inclusion problem} given in the form
\begin{equation*}\label{sumproblem}
0\in Ax+Bx,
\end{equation*}
where $A:\mathcal{H}\rightrightarrows \mathcal{H}$ is a set-valued  maximally monotone operator, and where $B:\mathcal{H}\to\mathcal{H}$ is a single-valued monotone operator. In this framework, we particularly investigate the corresponding {\it Inexact Tseng Algorithm} (\textbf{ITA}) applied to the Tikhonov regularized problem. The key idea consists in rewriting the iteration in the form
\begin{equation}\label{tseng}
x_{k+1}=Tx_k+t_{k+1},\qquad k=0,1,\ldots,
\end{equation}
where $T:\mathcal{H}\to\mathcal{H}$ is a well-defined single-valued operator satisfying a suitable {\it contractive property}, and where the estimate
\[
\|t_{k+1}\|\leq \delta
\]
holds with some given tolerance $\delta>0$.

The rest of the paper is organized as follows. In Section~\ref{s2}, we recall several preliminary notions and results from nonlinear analysis, mainly related to monotone operators and R-continuous mappings. Sections~\ref{s3} and \ref{s4} are devoted to the convergence analysis of \textbf{IPPA} and \textbf{ITA}, respectively. Concluding remarks and some topics of  {future research} are presented in the final Section~\ref{s5}.

\section{Preliminaries from Nonlinear Analysis}\label{s2}

Throughout this paper, $\mathcal{H}$ denotes a real Hilbert space endowed with the inner product $\langle \cdot,\cdot\rangle$ and the associated norm $\|\cdot\|$.

Let $F:\mathcal{H}\rightrightarrows \mathcal{H}$ be a set-valued operator/multifunction. The {\it domain}, {\it range}, {\it graph}, and set of {\it zeros} of $F$ are defined, respectively, by
\begin{align*}
\dom F: &= \{x\in\mathcal{H}\;|\;F(x)\neq\varnothing\},\\
\ran F: &= \bigcup_{x\in\mathcal{H}}F(x),\\
\gra F: &= \{(x,y)\in\mathcal{H}\times\mathcal{H}\;|\;y\in F(x)\},\\
\zer F: &= \{x\in\mathcal{H}\;|\;0\in F(x)\}.
\end{align*}
The {\it inverse} operator of $F$ is defined by
\[
F^{-1}(y)=\{x\in\mathcal{H}\;|\;y\in F(x)\}.
\]
It is straightforward to verify that
\[
\dom F=\ran F^{-1},
\qquad
\ran F=\dom F^{-1}.
\]
The operator $F$ is said to be {\it monotone} if we have
\[
\langle x^*-y^*,x-y\rangle\geq 0\;\mbox{ for all }\;(x,x^*),(y,y^*)\in\gra F.
\]
The operator \(F\) is called {\it maximally monotone}
if \(F\) is monotone and  there is no monotone operator
$G :\mathcal{H}\rightrightarrows \mathcal{H} $ such that
\[
\operatorname{gra} F \subsetneq \operatorname{gra} G.
\]
Furthermore, $F$ is called {\it $\alpha$-strongly monotone} with modulus $\alpha>0$ if
\[
\langle x^*-y^*,x-y\rangle
\geq \alpha\|x-y\|^2
\]
whenever $(x,x^*),(y,y^*)\in\gra F$. The {\it resolvent} of $F$ is given by
\[
J_F:=(I+F)^{-1}.
\]

It is well known by the classical Minty theorem that if $F$ is maximally monotone, then $J_F$ is single-valued, everywhere defined on $\mathcal{H}$, and firmly nonexpansive. Furthermore, if $F$ is $\alpha$-strongly monotone, then the operator $J_F$ is $\frac{1}{1+\alpha}$-Lipschitz continuous and hence is contractive; see, e.g., 
\cite{Bauschke,Nesterov,Reich1,Rockafellar} for more details and discussions. 

Finally in this section, we recall several versions of the notions of {\it R-continuity} and the {\it compact  R-continuity} of a set-valued mapping $\mathcal{A}:\mathcal{H} \rightrightarrows \mathcal{H}$ at a point ${\bar x}$ in its domain. These notions introduced and developed in  \cite{L1,LMT,LT} provide, in particular, useful tools for the convergence analysis of the  algorithms. 

\begin{definition}\label{rdef}
The set-valued mapping \noindent $\mathcal{A}:\mathcal{H} \rightrightarrows \mathcal{H}$ is called {\sc R-continuous} at ${\bar x}\in\dom\,\mathcal{A}$ if there exist a number $\sigma>0$ and an {increasing} function $\rho: \mathbb{R}^+\to \mathbb{R}^+$ satisfying $\lim_{r\to 0^+}\rho(r)=\rho(0)=0$ such that we have the inclusion
\begin{equation}\label{rcon}
\mathcal{A}(x) \subset \mathcal{A}({\bar x}  )+\rho(\Vert x-{\bar x}   \Vert)\ball\;\mbox{ for all }\;x\in  \ball({\bar x} ,\sigma)
\end{equation}
with the {\sc continuity modulus function} $\rho$ and {\sc radius} $\sigma$. When $\sigma=\infty$, $\mathcal{A}$ is  called {\sc globally R-continuous} at ${\bar x}$. The inclusion in \eqref{rcon} means that for each $y\in \mathcal{A}(x)$, there exists ${\bar y}  \in  \mathcal{A}({\bar x} )$ satisfying the estimate 
$$
\Vert y-{\bar y}  \Vert\le \rho(\Vert x-{\bar x}   \Vert)
$$ 
for all $x\in \ball({\bar x},\sigma)$. We further say that 
\begin{itemize}
\item $\mathcal{A}$ is {\sc $R$-Lipschitz continuous} at ${\bar x}  $ with modulus {$L>0$}  if $\rho( r)=Lr$.
\item $\mathcal{A}$ is {\sc $R$-H\"older continuous} at ${\bar x}  $  if $\rho( r)=Lr^\theta$ for some $L>0, \theta>0$.
\end{itemize}
\end{definition}\vspace*{0.05in}

\begin{definition}\label{crdef}
A set-valued mapping \noindent $\mathcal{A}:\mathcal{H} \rightrightarrows \mathcal{H}$ is  called {\sc compactly 
R-continuous} at ${\bar x}\in\dom\,\mathcal{A}$ if for every compact set $K\subset {\mathcal{H}}$, there exists a number $\sigma>0$ such that we can find an {increasing}  function $\rho: \mathbb{R}^+\to \mathbb{R}^+$ satisfying $\lim_{r\to 0^+}\rho(r)=\rho(0)=0$ with
\begin{equation*}\label{rconc}
\mathcal{A}(x)\cap K \subset \mathcal{A}({\bar x}  )+\rho(\Vert x-{\bar x}   \Vert)\ball\;\mbox{ for all }\;x\in  \ball({\bar x} ,\sigma).
\end{equation*}
Similarly to Definition~{\rm\ref{rdef}}, we say that 
\begin{itemize}
\item $\mathcal{A}$ is {\sc compactly $R$-Lipschitz continuous} at ${\bar x} $ with modulus $L>0$ if $\rho( r)=Lr$.

\item $\mathcal{A}$ is {\sc compactly $R$-H\"older continuous} at ${\bar x}$ if $\rho( r)=Lr^\theta$ for some 
$L, \theta>0$.
\end{itemize}
\end{definition}\vspace*{0.05in}

\begin{definition}\label{closed graph} Given a set-valued mapping $A:\mathcal{H} \rightrightarrows \mathcal{H}$, we say that the {\sc graph} of ${\cal A}$ is $($sequentially$)$  {\sc closed at a  point} $\bar x\in\dom\,{\cal A}$ if for any sequences $x_k\to\bar x$ and any sequence $y_k\in{\cal A}(x_k)$ converging to some $y$ as $k\to\infty$, we have $y\in{\cal A}(\bar x)$.
\end{definition}

Finally in this section, we formulate several {results} related to R-continuity and compactly R-continuity obtained in the recent papers \cite{LMT,LT}.

\begin{proposition}\label{localc}{\rm\cite{LMT}}
If $\mathcal{A}$ is locally compact around  zero, i.e.,  there is $\sigma>0$ such that {$\mathcal{A}( \sigma \ball\setminus \{0\})$} belongs to a compact set, then the compact R-continuity of ${\cal A}$ at zero yields the R-continuity  {of ${\cal A}$} at  this point.
\end{proposition}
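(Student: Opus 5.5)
The natural route is to apply the compact R-continuity from Definition~\ref{crdef} with the compact set supplied by the local compactness hypothesis. Fix $\sigma_0>0$ and a compact set $K\subset\mathcal{H}$ with $\mathcal{A}(\sigma_0\ball\setminus\{0\})\subset K$. Here $\bar x=0$ and $0\in\dom\mathcal{A}$. Definition~\ref{crdef} applied to this particular $K$ yields a radius $\sigma_1>0$ and an increasing modulus $\rho$ with $\lim_{r\to0^+}\rho(r)=\rho(0)=0$ such that
\[
\mathcal{A}(x)\cap K\subset \mathcal{A}(0)+\rho(\|x\|)\ball\quad\mbox{for all } x\in\ball(0,\sigma_1).
\]
I would then set $\sigma:=\min\{\sigma_0,\sigma_1\}$ and verify the inclusion \eqref{rcon} of Definition~\ref{rdef} with this same $\rho$ and radius $\sigma$.

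The verification splits into two cases. If $x\in\ball(0,\sigma)$ with $x\neq0$, then $x\in\sigma_0\ball\setminus\{0\}$, so $\mathcal{A}(x)\subset K$. Hence $\mathcal{A}(x)=\mathcal{A}(x)\cap K\subset\mathcal{A}(0)+\rho(\|x\|)\ball$, which is exactly the required inclusion. If $x=0$, the inclusion $\mathcal{A}(0)\subset\mathcal{A}(0)+\rho(0)\ball=\mathcal{A}(0)+\{0\}$ holds trivially. Here I read $\rho(0)\ball$ as $\{0\}$, which is consistent with the pointwise formulation in Definition~\ref{rdef}: for each $y\in\mathcal{A}(0)$ take $\bar y=y$. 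Together the two cases give R-continuity of $\mathcal{A}$ at zero with modulus $\rho$ and radius $\sigma$.

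The only delicate point, mild as it is, is the mismatch of radii and the exclusion of the origin from the compactness hypothesis. Both are handled by shrinking to $\sigma=\min\{\sigma_0,\sigma_1\}$ and treating $x=0$ separately, since $\mathcal{A}(0)$ need not lie in $K$. The conclusion is also independent of whether the ball in Definition~\ref{rdef} is open or closed: the hypothesis on $\sigma_0\ball\setminus\{0\}$ covers the closed ball, and if needed one can shrink $\sigma$ slightly.
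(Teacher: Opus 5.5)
Your proof is correct and complete. The paper gives no proof of this proposition; it only cites \cite{LMT}. Your argument is the direct verification from the definitions that the result rests on: apply Definition~\ref{crdef} to the compact set $K\supset\mathcal{A}(\sigma_0\ball\setminus\{0\})$, take $\sigma=\min\{\sigma_0,\sigma_1\}$ so that $\mathcal{A}(x)\cap K=\mathcal{A}(x)$ for $0<\|x\|\le\sigma$, and handle $x=0$ trivially via $\rho(0)=0$.
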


\begin{theorem}\label{compact}{\rm\cite{LT}}
If $\mathcal{A}:  \mathcal{H}\rightrightarrows \mathcal{H}$ has the closed graph  at zero and is  locally compact  around zero, then $\mathcal{A}$ is R-continuous at zero.
\end{theorem}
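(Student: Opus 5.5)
Theorem~\ref{compact} will be proved by contradiction, using a compactness argument. Under the hypotheses, fix $\sigma>0$ such that $\mathcal{A}(\sigma\ball\setminus\{0\})$ lies in a compact set $K$. Before going further I would check that $\mathcal{A}(\sigma\ball)\subset K\cup\mathcal{A}(0)$, and that $\mathcal{A}(0)$ is closed; the latter follows from graph closedness at zero applied to constant sequences $x_k=0$. The candidate modulus is the natural one,
\[
\rho(r):=\sup\{d(y,\mathcal{A}(0))\;|\;y\in\mathcal{A}(x),\ 0<\|x\|\le r\},
\]
defined for $0<r\le\sigma$, with $\rho(0):=0$. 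This function is increasing in $r$ and finite, because every such $y$ lies in the compact (hence bounded) set $K$ and $\mathcal{A}(0)\neq\varnothing$ since $0\in\dom\mathcal{A}$. For $r>\sigma$ I would simply set $\rho(r):=\rho(\sigma)$. By construction, the inclusion $\mathcal{A}(x)\subset\mathcal{A}(0)+\rho(\|x\|)\ball$ then holds on $\ball(0,\sigma)$, up to attainment of the distance, which I handle below.

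The key step is to show that $\rho(r)\to0$ as $r\to0^+$. Suppose not. Then there are $\varepsilon>0$, points $x_k\to0$ with $x_k\neq0$, and $y_k\in\mathcal{A}(x_k)$ with $d(y_k,\mathcal{A}(0))\ge\varepsilon$. Since $(y_k)\subset K$, I can pass to a subsequence with $y_k\to y$. Closedness of the graph at zero then gives $y\in\mathcal{A}(0)$. But the distance function is $1$-Lipschitz, so $d(y,\mathcal{A}(0))\ge\varepsilon$, which is a contradiction.

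A technical point remains: the definition asks for a point $\bar y\in\mathcal{A}(0)$ with $\|y-\bar y\|\le\rho(\|x\|)$, that is, the infimum in the distance must be attained. I would handle this by replacing $\rho$ with $2\rho$ when $\rho(\|x\|)>0$, which guarantees a near-minimizer. When $\rho(\|x\|)=0$, I use that $y$ is a limit point of $\mathcal{A}(0)$ and $\mathcal{A}(0)$ is closed, so $y\in\mathcal{A}(0)$ itself. The factor $2$ does not affect monotonicity or the limit at zero. Alternatively, one could note that $\mathcal{A}(0)\cap\overline{K}$ is compact, which also makes the distance attained for points near $K$. The main obstacle is really just the compactness extraction in the contradiction step; everything else is bookkeeping. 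Note that this result is also an instance of Proposition~\ref{localc}: the contradiction argument shows compact R-continuity at zero, and the local compactness assumption then upgrades it to R-continuity.
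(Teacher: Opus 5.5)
Your proof is correct, and it is essentially the route the paper points to (the paper only cites this result): graph closedness at zero gives compact R-continuity by Theorem~\ref{compact1}, and local compactness upgrades this by Proposition~\ref{localc}, and your subsequence extraction with the explicit modulus $\rho$ and the factor-$2$ fix for attainment is exactly those two steps written out directly. The one slip is your ``alternative'' remark: compactness of $\mathcal{A}(0)\cap K$ does not make $d(y,\mathcal{A}(0))$ attained, because minimizing points can lie outside $K$ and a closed set in infinite-dimensional $\mathcal{H}$ need not have nearest points, so either keep the factor-$2$ device or redefine $\rho$ using $d(y,\mathcal{A}(0)\cap K)$, which your contradiction argument also supports since limits of the $y_k$ lie in $\mathcal{A}(0)\cap K$.
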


\begin{theorem}\label{compact1}{\rm\cite{LMT}}
The set-valued mapping $\mathcal{A}:\mathcal{H}\rightrightarrows \mathcal{H}$ is compactly R-continuous at zero {and $\mathcal{A}(0)$ is closed} if and only if its graph is closed at this point.
\end{theorem}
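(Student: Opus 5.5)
We prove Theorem~\ref{compact1}, which has two directions.

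\emph{Forward direction.} Assume ${\cal A}$ is compactly R-continuous at $0$ and ${\cal A}(0)$ is closed. Take sequences $x_k\to 0$ and $y_k\in{\cal A}(x_k)$ with $y_k\to y$. We must show $y\in{\cal A}(0)$.

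Set $K:=\{y_k\mid k\in\mathbb N\}\cup\{y\}$. This set is compact because it is a convergent sequence together with its limit. Compact R-continuity gives $\sigma>0$ and a modulus $\rho$ for this $K$. For all large $k$ we have $\|x_k\|<\sigma$. Since $y_k\in{\cal A}(x_k)\cap K$, we can pick $\bar y_k\in{\cal A}(0)$ with $\|y_k-\bar y_k\|\le\rho(\|x_k\|)$. Because $\rho(r)\to0$ as $r\to0^+$, we get $\bar y_k\to y$. Closedness of ${\cal A}(0)$ then yields $y\in{\cal A}(0)$.

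\emph{Converse direction.} Assume the graph of ${\cal A}$ is closed at $0$.

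First, ${\cal A}(0)$ is closed. Take $y_k\in{\cal A}(0)$ with $y_k\to y$, and use the constant sequence $x_k=0$ in Definition~\ref{closed graph}; this gives $y\in{\cal A}(0)$.

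Next, fix a compact set $K$ and take $\sigma=1$. Define
\[
\rho_0(r):=\sup\{d(z,{\cal A}(0))\mid z\in{\cal A}(x)\cap K,\ \|x\|\le r\},
\]
with the convention $\sup\emptyset=0$. Then set $\rho(r):=\rho_0(r)$ for $r\le1$.
\begin{itemize}
\item $\rho_0$ is increasing, since enlarging $r$ enlarges the set over which the supremum is taken.
\item $\rho_0$ is finite on $[0,1]$, since $K$ is bounded and ${\cal A}(0)\neq\emptyset$ (because $0\in\dom{\cal A}$).
\item $\rho_0(0)=0$, since the only admissible $x$ is $0$ and every $z\in{\cal A}(0)\cap K$ has distance $0$ to ${\cal A}(0)$.
\end{itemize}
For $r>1$ we may extend $\rho$ by the constant value $\rho_0(1)$; the values beyond $\sigma$ do not matter.

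\emph{Main step: $\rho_0(r)\to0$ as $r\to0^+$.} Suppose not. Then there are $\varepsilon>0$, points $x_k\to0$, and $z_k\in{\cal A}(x_k)\cap K$ with $d(z_k,{\cal A}(0))\ge\varepsilon$. By compactness of $K$, a subsequence satisfies $z_k\to z$. Graph closedness at $0$ gives $z\in{\cal A}(0)$. But the distance function $d(\cdot,{\cal A}(0))$ is continuous, so $d(z,{\cal A}(0))\ge\varepsilon>0$, a contradiction.

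\emph{Attaining the distance.} The definition requires $\|y-\bar y\|\le\rho(\|x\|)$ with $\bar y\in{\cal A}(0)$ actually attaining the bound, not just approximating it. In a Hilbert space with ${\cal A}(0)$ merely closed, the distance need not be attained. This is the one delicate point. The plan is to use $2\rho$ as the modulus, with the convention $2\rho(0)=0$:
\begin{itemize}
\item if $d(y,{\cal A}(0))>0$, pick $\bar y\in{\cal A}(0)$ with $\|y-\bar y\|\le 2\,d(y,{\cal A}(0))\le2\rho(\|x\|)$;
\item if $d(y,{\cal A}(0))=0$, then $y\in{\cal A}(0)$ by closedness, and we take $\bar y=y$.
\end{itemize}
This handles the problematic case $\rho(\|x\|)=0$ exactly. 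Since $2\rho$ is still increasing with limit $0$ at $0^+$, this completes the proof.
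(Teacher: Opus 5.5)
The paper gives no proof of this theorem; it imports the result from \cite{LMT}, so there is no in-paper argument to compare yours against. Judged on its own, your proof is correct and complete.

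In the forward direction, the compact set $\{y_k\}\cup\{y\}$ together with the closedness of $\mathcal{A}(0)$ does exactly what is needed. In the converse, each step is sound: closedness of $\mathcal{A}(0)$ follows from constant sequences, the supremum modulus $\rho_0$ is finite with $\rho_0(0)=0$ because $\mathcal{A}(0)\neq\emptyset$, and the contradiction through sequential compactness of $K$ and graph closedness is valid.

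Your factor-$2$ step is not pedantry but a needed repair. Definition~\ref{rdef} requires an actual $\bar y\in\mathcal{A}(0)$ with $\|y-\bar y\|\le\rho(\|x\|)$. In infinite dimensions, the distance to a closed nonconvex set need not be attained.

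One cosmetic point: ``increasing'' in Definition~\ref{crdef} might be read as strictly increasing. In that case use $\rho(r)=2\rho_0(r)+r$. It still tends to $0$ at $0^+$ and dominates $2\rho_0$, and nothing else in your argument changes.
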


\section{Convergence of \textbf{IPPA}}\label{s3}

In this section, we study the convergence of \textbf{IPPA} applied to  the Tikhonov regularized problem for finding an approximate solution to (\ref{main}).

\begin{theorem}\label{thm-prox}
Suppose that $A:\mathcal{H}\rightrightarrows \mathcal{H}$ is a maximally monotone operator, and let
\[
S:=\zer A\neq\varnothing.
\]
Assume further that $A^{-1}$ is R-continuous at $0$ with the modulus function $\rho$. Let $(x_k)$ be the sequence generated by \textbf{IPPA} as follows:
\[
x_{k+1}
=
J_{\gamma \widetilde{A}}x_k+y_{k+1}
=
(I+\gamma \widetilde{A})^{-1}x_k+y_{k+1},
\qquad x_0\in\mathcal{H},
\]
with the notation
\[
\widetilde{A}=A+\epsilon I,
\]
and the bounded-error estimate
\[
\|y_{k+1}\|\leq \delta,
\]
where $\epsilon,\delta>0$ are some fixed parameters. Then it holds that
\begin{equation*}
\limsup_{k\to\infty} d(x_k,S)
\leq
\delta(1+\gamma^{-1}\epsilon^{-1})
+
\rho(\epsilon a)
\end{equation*}
with the notation
\[
a:=\|\widetilde{x}\|,
\] 
where $\widetilde{x}$ stands for the minimal norm element of $S$. Moreover, by choosing
\[
\delta=\epsilon^2,
\]
we get the convergence condition
\begin{equation*}
\limsup_{k\to\infty} d(x_k,S)
\leq
\epsilon^2+\gamma^{-1}\epsilon+\rho(\epsilon a)\to 0 \;\; {\rm as}\; \;\epsilon\to 0.
\end{equation*}
\end{theorem}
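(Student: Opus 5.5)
The proof rests on the strong monotonicity that the Tikhonov term adds. Since $\widetilde{A}=A+\epsilon I$ is $\epsilon$-strongly monotone and maximally monotone, $\gamma\widetilde A$ is $\gamma\epsilon$-strongly monotone, and the preliminaries give that $T:=J_{\gamma\widetilde A}$ is $q$-Lipschitz with $q:=\frac{1}{1+\gamma\epsilon}<1$. The regularized problem $0\in \widetilde A x$ therefore has a unique solution $x_\epsilon$, and $x_\epsilon$ is the unique fixed point of $T$, because $x=Tx$ iff $0\in\gamma \widetilde A x$.

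The first step is to compare the iterates with $x_\epsilon$. From $x_{k+1}-x_\epsilon=Tx_k-Tx_\epsilon+y_{k+1}$ we get
\[
\|x_{k+1}-x_\epsilon\|\le q\|x_k-x_\epsilon\|+\delta .
\]
Iterating this recursion gives
\[
\limsup_{k\to\infty}\|x_k-x_\epsilon\|\le \frac{\delta}{1-q}=\delta\,\frac{1+\gamma\epsilon}{\gamma\epsilon}=\delta(1+\gamma^{-1}\epsilon^{-1}).
\]
This produces exactly the first term of the bound.

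The second step is to show $d(x_\epsilon,S)\le\rho(\epsilon a)$. By definition of $x_\epsilon$, $-\epsilon x_\epsilon\in Ax_\epsilon$, that is, $x_\epsilon\in A^{-1}(-\epsilon x_\epsilon)$. We need $\|x_\epsilon\|\le a=\|\widetilde x\|$. To get it, use monotonicity of $A$ with the pairs $(x_\epsilon,-\epsilon x_\epsilon)$ and $(\widetilde x,0)$:
\[
\langle -\epsilon x_\epsilon, x_\epsilon-\widetilde x\rangle\ge 0,
\]
so $\|x_\epsilon\|^2\le\langle x_\epsilon,\widetilde x\rangle$, hence $\|x_\epsilon\|\le\|\widetilde x\|=a$. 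Then $\|-\epsilon x_\epsilon\|\le\epsilon a$. For $\epsilon a$ below the radius $\sigma$, R-continuity of $A^{-1}$ at $0$ gives
\[
x_\epsilon\in A^{-1}(-\epsilon x_\epsilon)\subset S+\rho(\|\epsilon x_\epsilon\|)\ball\subset S+\rho(\epsilon a)\ball,
\]
using $A^{-1}(0)=S$ and that $\rho$ is increasing. If $\epsilon a\ge\sigma$, the statement should be read for $\epsilon$ small, or with $\sigma=\infty$ in the global case; the final claim only concerns $\epsilon\to0$ anyway.

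Finally, combine the two steps with the $1$-Lipschitz property of $d(\cdot,S)$:
\[
d(x_k,S)\le\|x_k-x_\epsilon\|+d(x_\epsilon,S).
\]
Taking $\limsup$ gives the main estimate. Substituting $\delta=\epsilon^2$ yields the bound $\epsilon^2+\gamma^{-1}\epsilon+\rho(\epsilon a)$, which tends to $0$ as $\epsilon\to0$ since $\rho(r)\to0$ as $r\to0^+$. The existence of the minimal norm element $\widetilde x$ follows because $S$ is closed and convex for maximally monotone $A$.

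The only genuinely delicate point is the a priori bound $\|x_\epsilon\|\le a$. This bound is what lets R-continuity be applied at the explicit radius $\epsilon a$ rather than at some uncontrolled point. The monotonicity comparison with the minimal norm solution handles it.
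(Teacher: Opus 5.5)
Your proof is correct and follows essentially the paper's argument: $J_{\gamma\widetilde A}$ is a contraction with modulus $\frac{1}{1+\gamma\epsilon}$, you iterate the error recursion around the Tikhonov solution, apply R-continuity of $A^{-1}$ at $0$ together with $\|x_\epsilon\|\le a$, and finish with the triangle inequality. The differences are minor: you iterate directly on $x_k$ instead of the paper's shifted sequence $w_k=x_k-y_k$, which gives the same constant because $\frac{1}{1-\kappa}=1+\frac{\kappa}{1-\kappa}$; you prove $\|x_\epsilon\|\le a$ by a one-line monotonicity argument where the paper cites it; and you rightly flag the radius requirement $\epsilon a<\sigma$ for non-global R-continuity, which the paper's proof leaves implicit.
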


\begin{proof}
Consider the sequence
\[
w_k:=x_k-y_k
\]
along which we have the equalities
\begin{align*}\nonumber
w_{k+1}
&=
J_{\gamma \widetilde{A}}x_k
-
J_{\gamma \widetilde{A}}(x_k-y_k)
+
J_{\gamma \widetilde{A}}(x_k-y_k) \\
&=
J_{\gamma \widetilde{A}}x_k
-
J_{\gamma \widetilde{A}}(x_k-y_k)
+
J_{\gamma \widetilde{A}}(w_k).
\end{align*}
Denote by $x^*$ the unique solution to the Tikhonov-regularized problem
\[
0\in \widetilde{A}(x^*)
\quad\Longleftrightarrow\quad
x^*=J_{\gamma \widetilde{A}}x^*.
\]
Observe that $\widetilde{A}$ is maximally and $\epsilon$-strongly monotone, and hence the resolvent $J_{\gamma \widetilde{A}}$ is $\kappa$-Lipschitz continuous with the modulus
\[
\kappa:=\frac{1}{1+\gamma\epsilon}<1.
\]
Therefore, we arrive at the relationships
\begin{align*}\nonumber
\|w_{k+1}-x^*\|
&=
\Bigl\|
J_{\gamma \widetilde{A}}x_k
-
J_{\gamma \widetilde{A}}(x_k-y_k)
+
J_{\gamma \widetilde{A}}(w_k)
-
J_{\gamma \widetilde{A}}x^*
\Bigr\| \\\nonumber
&\leq
\|J_{\gamma \widetilde{A}}x_k
-
J_{\gamma \widetilde{A}}(x_k-y_k)\|
+
\|J_{\gamma \widetilde{A}}(w_k)
-
J_{\gamma \widetilde{A}}x^*\| \\\nonumber
&\leq
\kappa\|y_k\|
+
\kappa\|w_k-x^*\| \\
&\leq
\kappa\delta+\kappa\|w_k-x^*\|.
\end{align*}
Iterating the resulting inequality yields
\begin{align*}\nonumber
\|w_{k+1}-x^*\|
&\leq
\kappa^{k+1}\|w_0-x^*\|
+
\kappa\delta(1+\kappa+\cdots+\kappa^k) \\\nonumber
&\leq
\kappa^{k+1}\|x_0-x^*\|
+
\frac{\kappa\delta}{1-\kappa} \\
&\leq
\kappa^{k+1}\|x_0-x^*\|
+
\delta\gamma^{-1}\epsilon^{-1},
\end{align*}
which brings us to the estimate
\[
\limsup_{k\to\infty}\|w_k-x^*\|
\leq
\delta\gamma^{-1}\epsilon^{-1}.
\]
Observe furthermore that
\[
x^*\in A^{-1}(-\epsilon x^*).
\]
Since $A^{-1}$ is R-continuous at $0$, {this} tells us that
$$
x^*\in A^{-1}(-\epsilon x^*)\subset A^{-1}(0)+\rho(\epsilon \Vert x^* \Vert)\ball=S+\rho(\epsilon \Vert x^* \Vert)\ball,
$$
which implies in turn that
\[
d(x^*,S)
\leq
\rho(\epsilon\|x^*\|).
\]
Moreover, it follows from \cite{L1} that
\[
\|x^*\|
\leq
\|\widetilde{x}\|
=
a,
\]
and thus we arrive at the estimate
\[
d(x^*,S)\leq \rho(\epsilon a).
\]
We have the inequality
\begin{align*}
d(x_k,S)
\leq
\|x_k-w_k\|
+
\|w_k-x^*\|
+
d(x^*,S). 
\end{align*}
Passing there to the limit as $k\to\infty$ gives us the claimed condition
$$
\limsup_{k\to\infty} d(x_k,S)\leq
\delta(1+
\gamma^{-1}\epsilon^{-1})
+
 \rho(\epsilon a).
$$
Letting finally $\delta= \epsilon^2$ completes the proof of the theorem.
\end{proof}

\begin{remark} \label{rm1}
{\rm The following remarks are in order:

{\bf(i)} Theorem~\ref{thm-prox} shows that for all $k$ sufficiently large, the iterate $x_k$ is an {\it approximate solution} to the original inclusion  problem \eqref{main} provided that $\delta$ is sufficiently small and hence $\epsilon=\sqrt{\delta}$ is small as well.

{\bf(ii)} If $\mathcal{H}=\mathbb{R}^n$, it is sufficient to assume that $A^{-1}$ is {\it compactly R-continuous} at $0$ since $x^*$ belongs to the compact set $a\mathbb{B}$. Then this condition  automatically holds since $A$ is a maximally monotone operator, which implies that the graph of $A^{-1}$  is closed  and hence $A^{-1}$ is compactly R-continuous at $0$ by Theorem~\ref{compact1}. 

{\bf(iii)}  In practice, {\it errors cannot be summable}. Indeed, just simple functions such as $\sin x, e^x, \sqrt{x},\ldots$ have small positive errors. This indicates that the summable error assumption is not the reason to ensure the stability of {\bf PPA} but the {\it strong monotonicity} is, which makes the resolvent to be {\it contractive}. Note that even if the operator $A$ is only maximally monotone, there is some strong monotonicity part of $A$. {This} means that $\Vert J_{\gamma A}x-J_{\gamma A}y\Vert \le \kappa \Vert x-y\Vert$ for some $\kappa\in (0,1)$, $\gamma>0,$ and  $ x, y$ belong to the designated region.  Therefore, we also obtain the convergence to approximate solutions if {\bf PPA} is applied to the original operator $A$ with some suitable initial point $x_0$ and stepsize $\gamma$. For instance, if $A(x)=x-\sin x$, then it is maximally monotone but not strongly monotone on $[-\pi/2,\pi/2]$. However, this operator is strongly monotone on $C=\{x: \epsilon\le\vert x \vert\le  \pi/2\}$ for all small positive $\epsilon$. Thus if $x_0\in C$, then {\bf PPA} applied to $A$ converges to an appropriate approximation of the exact solution $x=0$ to \eqref{main} in the presence of nonsummable errors.

{\bf(iv) }  {In  \cite{zas10}, Zaslavski considered the {\bf IPPA} applied to a convex, lower semicontinuous function $f: H\to \R\cup \{\infty\}$ with $\inf_{x\in H} f(x)>-\infty$ under the rather restrictive assumption
\beq\label{coer}
\lim_{\Vert x \Vert\to \infty} f(x)=\infty.
\eeq
Notice that convex quadratic functions which are not strongly convex do not necessarily satisfy this assumption. For example, consider the function
$$
f:\mathbb{R}^3\to\mathbb{R},
\qquad
f(x)=x_1^2+x_2^2,
$$
where $x=(x_1,x_2,x_3)$. Then
$$
f(0,0,n)=0,
\qquad
\|(0,0,n)\|=n\to\infty
\quad\text{as }n\to\infty.
$$
Thus, (\ref{coer}) is not satisfied.
 Moreover, even when \eqref{coer} holds, the number of iterations required to obtain an approximate solution may be very large; see \cite[Theorem 1.1]{zas10} and \cite[Theorem 1.1]{zas12}.  We can prove that the convergence rate of {\bf PPA} to obtain approximate solutions is $O(\frac{1}{\sqrt{k}})$ (see \cite{Brezis,Gu}) and so is {\bf IPPA} by using \cite[Lemma 2.4]{zas12}. Indeed if 
 $$
 x_{k+1}=J_{\gamma {A}}x_k,
 $$
 it is classical that 
 $$
 \frac{ x_{k+1}- x_{k}}{\gamma}\in -A( x_{k+1}),
 $$
 and
 $$
 \Vert x_{k+1}-x^*\Vert^2\le  \Vert x_{k}-x^*\Vert^2- \Vert x_{k+1}-x_k\Vert^2,
 $$
 for some $x^*\in A^{-1}(0)$.
 Therefore
 $$
 \sum_{i=0}^{k-1}\Vert x_{i+1}-x_i\Vert^2\le \Vert x_0-x^*\Vert^2-\Vert x_{k}-x^*\Vert^2\le \Vert x_0-x^*\Vert^2:=C.
 $$
 Since $\Vert x_{i+1}-x_i\Vert=\Vert J_{\gamma A}x_{i}-J_{\gamma A}x_{i-1}\Vert\le \Vert x_{i} -x_{i-1}\Vert$, we deduce that 
 $$
\Vert x_{k}-x_{k-1}\Vert\le \frac{C}{\sqrt{k}},
 $$
 and the conclusion follows. 
}

}
\end{remark}

\section{Convergence of \textbf{ITA}}\label{s4}

In this section, we study the {\it composite inclusion problem} in the summation form 
\beq\label{sum}
0\in Ax+Bx,
\eeq
where $A:\mathcal{H}\rightrightarrows \mathcal{H}$ is a maximally monotone operator while $B:\mathcal{H}\to\mathcal{H}$ is an $L$-Lipschitz continuous and maximally monotone one. The {\it regularized inclusion} problem associated with \eqref{sum} is given as follows
\begin{equation}\label{tseng1}
0\in Ax+Bx+\epsilon x
=
\widetilde{A}x+Bx,
\end{equation}
where $\widetilde{A}=A+\epsilon I$ for some small $\epsilon>0$.

Consider the {\bf Inexact Tseng Algorithm (ITA)} to solve the regularized inclusion \eqref{tseng1} associated with \eqref{sum} and formulated as (see, e.g., \cite{Adeolu,tseng})
$$
\begin{cases}
y_{k}=J_{\gamma \tilde{A}}(x_k-\gamma Bx_k)+z_k,\\
x_{k+1}= y_k+\gamma (Bx_k-By_k),
\end{cases}
$$
where $z_k$ is the error in the step using the resolvent. Denoting $w_k:=y_k-z_k$ for $k\ge 0$ and recalling that $B$ is Lipschitz continuous, we can rewrite {\bf ITA} in the form
$$
\begin{cases}
w_{k}=J_{\gamma \tilde{A}}(x_k-\gamma Bx_k), \;k\ge 0,\\
x_{k+1}= w_k+\gamma (Bx_k-Bw_k)+t_{k+1},
\end{cases}
$$
where $\Vert t_{k+1}\Vert\le \delta$ for some numbers $\gamma, \delta>0$. The latter admits the equivalent representation
$$
x_{k+1}= Tx_k+t_{k+1},\;k\ge 0,\;x_0\in \mathcal{H},
$$
where the well-defined mapping $T: \mathcal{H}\to \mathcal{H}$ is given by
\beq\label{det}
\begin{cases}
y=J_{\gamma \tilde{A}}(x-\gamma Bx),\\
Tx= y+\gamma (Bx-By).
\end{cases}
\eeq

The following lemma of its own interest plays a crucial role in establishing the convergence of {\bf ITA} in the subsequent Theorem~\ref{thm:ita}.

\begin{lemma}\label{ab}
Let $x^*$ be the unique fixed point of $T$. Choose $\gamma>0$ such that $\gamma^2 L^2\le 1-2\gamma\varepsilon $. Then the operator $\Vert T(\cdot)-x^*\Vert: \mathcal{H}\to \R_+$ is $\kappa$-Lipschitz continuous with the modulus
$$
\kappa=\sqrt{1-\gamma\epsilon}<1.
$$
\end{lemma}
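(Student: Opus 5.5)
The plan is to prove the quasi-contraction estimate
$$
\|Tx-x^*\|\le \kappa\|x-x^*\|\quad\mbox{ for all }\;x\in\mathcal{H},\qquad \kappa=\sqrt{1-\gamma\epsilon},
$$
which is the sense in which $\|T(\cdot)-x^*\|$ is $\kappa$-Lipschitz relative to $x^*$, and which is exactly what the convergence analysis of \textbf{ITA} needs. The approach is the standard Tseng energy argument. The only new ingredient is the $\epsilon$-strong monotonicity of $\widetilde{A}=A+\epsilon I$, which replaces the usual nonexpansiveness by a strict decrease.

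First I fix $x\in\mathcal{H}$ and let $y=J_{\gamma\widetilde{A}}(x-\gamma Bx)$ as in \eqref{det}. Unwinding the resolvents gives
$$
\frac{x-\gamma Bx-y}{\gamma}\in \widetilde{A}y,
$$
and, since $x^*=Tx^*$ forces $y^*=x^*$ in \eqref{det}, also $-Bx^*\in\widetilde{A}x^*$. The $\epsilon$-strong monotonicity of $\widetilde{A}$ then yields
$$
\langle x-y-\gamma(Bx-Bx^*),\,y-x^*\rangle\ge \gamma\epsilon\|y-x^*\|^2 .
$$

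Next I expand the target quantity:
$$
\|Tx-x^*\|^2=\|y-x^*\|^2+2\gamma\langle Bx-By,\,y-x^*\rangle+\gamma^2\|Bx-By\|^2 .
$$
I rewrite the first term via the identity $\|y-x^*\|^2=\|x-x^*\|^2-\|x-y\|^2+2\langle y-x,\,y-x^*\rangle$. Substituting the strong monotonicity inequality for $2\langle y-x,y-x^*\rangle$, the cross terms involving $B$ combine into $-2\gamma\langle By-Bx^*,\,y-x^*\rangle$. This is nonpositive by the monotonicity of $B$, and cancelling it is the key step. Using the $L$-Lipschitz continuity of $B$ on $\gamma^2\|Bx-By\|^2$, I arrive at
$$
\|Tx-x^*\|^2\le \|x-x^*\|^2-(1-\gamma^2L^2)\|x-y\|^2-2\gamma\epsilon\|y-x^*\|^2 .
$$

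Finally, the stepsize condition $\gamma^2L^2\le 1-2\gamma\epsilon$ gives $1-\gamma^2L^2\ge 2\gamma\epsilon$. Hence the two negative terms are bounded above by $-2\gamma\epsilon\bigl(\|x-y\|^2+\|y-x^*\|^2\bigr)$. The elementary inequality $\|x-y\|^2+\|y-x^*\|^2\ge\frac12\|x-x^*\|^2$ then yields $\|Tx-x^*\|^2\le(1-\gamma\epsilon)\|x-x^*\|^2$, which is the claimed estimate.

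The main obstacle is the bookkeeping in the middle step, namely arranging the inner products so that the $B$-terms telescope into $-2\gamma\langle By-Bx^*,y-x^*\rangle$. Everything else is routine. I also note that $\kappa<1$ requires $\gamma\epsilon<1$, and this holds automatically because the stepsize condition forces $2\gamma\epsilon\le 1$.
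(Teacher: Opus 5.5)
Your proof is correct and is essentially the paper's argument. The paper applies the monotonicity of $B$ first, obtaining $\langle Tx-x,\,y-x^*\rangle\le-\gamma\epsilon\|y-x^*\|^2$, and then uses the polarization identity to reach the same bound $\|Tx-x^*\|^2\le\|x-x^*\|^2+\gamma^2\|Bx-By\|^2-\|x-y\|^2-2\gamma\epsilon\|y-x^*\|^2$ before finishing exactly as you do. One small point to make explicit: your claim that $Tx^*=x^*$ forces $y^*=x^*$ needs $\gamma L<1$, via $\|x^*-y^*\|=\gamma\|Bx^*-By^*\|\le\gamma L\|x^*-y^*\|$, and your stepsize condition does guarantee this.
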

\begin{proof}
It is easy to see that $T$ is single-valued being defined on the whole space $\mathcal{H}$ since $\tilde{A}$ is a maximally monotone operator. Note that $\tilde{A}+B$ is strongly maximally monotone. Thus $T$ has a unique fixed point $x^*$, which satisfies the inclusion $0\in \tilde{A}x^*+Bx^*$. Let $x\in \mathcal{H}$ and $y=J_{\gamma \tilde{A}}(x-\gamma Bx)$. Then we clearly get
\beq\label{discr}
\frac{y-x}{\gamma}+Bx\in -\tilde{A}y
\eeq
together with the aforementioned inclusion
$$
Bx^*\in -\tilde{A}x^*.
$$
Since $\tilde{A}$ is $\epsilon$-strongly monotone, it follows that
\[
\left\langle
\frac{y-x}{\gamma}+{B}x-{B}x^*,
\, y-x^*
\right\rangle \le -\epsilon \Vert y-x^*\Vert^2.
\]
Combining the obtained inequality with the  monotonicity of $B$ tells us that
\begin{equation}\label{eq0}
\left\langle
\frac{y-x}{\gamma}+{B}x-{B}y,
\, y-x^*
\right\rangle \le -\epsilon \Vert y-x^*\Vert^2.
\end{equation}
By $\gamma>0$, we deduce from (\ref{det}) and \eqref{eq0} that
\[
\langle Tx-x,  y-x^*\rangle \le -\gamma \epsilon \Vert y-x^*\Vert^2,
\]
which implies in turn that 
\baqn
&&\langle Tx-x^*, y-x^*\rangle\le \langle x-x^*, y-x^*\rangle-\gamma \epsilon \Vert y-x^*\Vert^2\\
&\iff& \Vert Tx-x^*\Vert^2+\Vert y-x^*\Vert^2-\Vert Tx-y\Vert^2\\
&\le&\Vert x-x^*\Vert^2+\Vert y-x^*\Vert^2-\Vert x-y\Vert^2-2\gamma \epsilon \Vert y-x^*\Vert^2\\
&\iff& \Vert Tx-x^*\Vert^2\le \Vert x-x^*\Vert^2+\gamma^2 \Vert Bx-By\Vert^2-\Vert x-y\Vert^2-2\gamma \epsilon \Vert y-x^*\Vert^2.
\eaqn
Remembering that $B$ is $L$-Lipschitz continuous and that 
$$
\gamma^2 L^2\le 1-2\gamma\varepsilon
$$
brings us to the inequalities
$$
\Vert Tx-x^*\Vert^2\le \Vert x-x^*\Vert^2-2\gamma \epsilon (\Vert x-y\Vert^2+\Vert y-x^*\Vert^2)\le(1-\gamma \epsilon) \Vert x-x^*\Vert^2,
$$
which verify the claim of the lemma.
\end{proof}

Now we are ready to establish the convergence of {\bf ITA} with a constructive estimate for approximate solutions to \eqref{sum}.

\begin{theorem}\label{thm:ita} Let $A:\mathcal{H}\rightrightarrows \mathcal{H}$ be an arbitrary maximally monotone operator, and let the operator $B:\mathcal{H}\to\mathcal{H}$ be an $L$-Lipschitz continuous and maximally monotone. Denote 
\[
S:=\zer (A+B)\neq\varnothing
\]
and assume that $(A+B)^{-1}$ is R-continuous at $0$ with modulus function $\rho$. Let $(x_k)$ be the sequence generated by the {\bf ITA} applied to the regularized problem \eqref{tseng1}. Choose $\gamma>0$ such that $\gamma^2 L^2\le 1-2\gamma\varepsilon $.
Then we have the estimate
\begin{equation}
d(x_k,S)
\leq
\delta(1+2\gamma^{-1}\epsilon^{-1})
+
\rho(\epsilon a)
\end{equation}
with the notation
\[
a:=\|\widetilde{x}\|,
\]
where $\widetilde{x}$ stands for the minimal norm element of $S$. 
By choosing
\[
\delta=\epsilon^2,
\]
we obtain the convergence condition
\begin{equation*}
d(x_k,S)
\leq
\epsilon^2+2\gamma^{-1}\epsilon+\rho(\epsilon a)\to 0 \;\; {\rm as}\; \;\epsilon\to 0.
\end{equation*}
\end{theorem}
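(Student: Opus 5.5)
The plan follows the proof of Theorem~\ref{thm-prox}, with Lemma~\ref{ab} playing the role that the contractivity of $J_{\gamma\widetilde A}$ played there. Let $x^*$ be the unique fixed point of $T$, i.e., the unique solution of $0\in \widetilde{A}x^*+Bx^*$. Write $x_{k+1}=Tx_k+t_{k+1}$ with $\|t_{k+1}\|\le\delta$. Since $Tx^*=x^*$, Lemma~\ref{ab} gives $\|Tx-x^*\|\le\kappa\|x-x^*\|$ with $\kappa=\sqrt{1-\gamma\epsilon}$; the $\kappa$-Lipschitz property of $\|T(\cdot)-x^*\|$ is only used at the fixed point. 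Then
\[
\|x_{k+1}-x^*\|\le \|Tx_k-x^*\|+\|t_{k+1}\|\le \kappa\|x_k-x^*\|+\delta ,
\]
and iterating yields $\|x_{k}-x^*\|\le \kappa^{k}\|x_0-x^*\|+\delta/(1-\kappa)$.

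Next I bound $1/(1-\kappa)$. Using $1-\sqrt{1-s}\ge s/2$ for $s=\gamma\epsilon\in(0,1]$, we get $1-\kappa\ge\gamma\epsilon/2$, so $\delta/(1-\kappa)\le 2\delta\gamma^{-1}\epsilon^{-1}$. This explains the factor $2$ in the statement. Since the transient term $\kappa^k\|x_0-x^*\|$ vanishes, the stated estimate must be read as a $\limsup$, or as holding for $k$ large up to that transient. I would state it that way and absorb the extra $\delta$ in $\delta(1+2\gamma^{-1}\epsilon^{-1})$ as slack, or keep it if errors are measured on $y_k$ as in Theorem~\ref{thm-prox}.

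For the regularization error, $0\in (A+B)x^*+\epsilon x^*$ means $x^*\in (A+B)^{-1}(-\epsilon x^*)$. The R-continuity of $(A+B)^{-1}$ at $0$ then gives $d(x^*,S)\le\rho(\epsilon\|x^*\|)$, provided $\epsilon\|x^*\|<\sigma$, which holds for small $\epsilon$. It remains to show $\|x^*\|\le\|\widetilde x\|=a$, as in \cite{L1}. Here $A+B$ is maximally monotone because $B$ is continuous, monotone and everywhere defined. Since $-\epsilon x^*\in(A+B)x^*$ and $0\in(A+B)\widetilde x$, monotonicity gives $\langle -\epsilon x^*, x^*-\widetilde x\rangle\ge0$. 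Hence $\|x^*\|^2\le\langle x^*,\widetilde x\rangle\le \|x^*\|\|\widetilde x\|$, and $\rho$ increasing yields $d(x^*,S)\le\rho(\epsilon a)$.

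Finally, $d(x_k,S)\le\|x_k-x^*\|+d(x^*,S)$; letting $k\to\infty$ gives the bound, and substituting $\delta=\epsilon^2$ gives $\epsilon^2+2\gamma^{-1}\epsilon+\rho(\epsilon a)\to0$. The main obstacle is the clean constant $2\gamma^{-1}\epsilon^{-1}$, which rests on the elementary inequality $1-\sqrt{1-s}\ge s/2$. A secondary issue is the $\limsup$ versus all-$k$ interpretation. The compatibility of the step-size condition $\gamma^2L^2\le 1-2\gamma\epsilon$ with $\gamma\epsilon\le 1$ is automatic, since the condition forces $2\gamma\epsilon\le 1$.
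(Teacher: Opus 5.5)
Your proposal is correct and follows the paper's proof essentially step by step. It uses the quasi-contraction $\|Tx-x^*\|\le\kappa\|x-x^*\|$ from Lemma~\ref{ab}, iteration with a geometric sum, the bound $\frac{1}{1-\kappa}=\frac{1+\kappa}{\gamma\epsilon}\le 2\gamma^{-1}\epsilon^{-1}$, and R-continuity of $(A+B)^{-1}$ applied to $x^*\in(A+B)^{-1}(-\epsilon x^*)$ together with $\|x^*\|\le a$. The differences are minor: you iterate on $x_k$ directly rather than on $z_k=x_k-t_k$, which makes the extra $\delta$ term unnecessary; you prove $\|x^*\|\le a$ by monotonicity instead of citing it; and you correctly flag that the estimate is a $\limsup$ statement and that local R-continuity needs $\epsilon a<\sigma$.
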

\begin{proof}
It follows that
$$
x_{k+1}= Tx_k+t_{k+1},\;k=0,1,\ldots.
$$
Let $z_k:=x_{k}-t_k$, and let $x^*$ be the unique fixed point of $T$ that also satisfies the inclusion $0\in \tilde{A}x^*+Bx^*$. Using Lemma~\ref{ab}, we get
\begin{align*}
\|z_{k+1}-x^*\|
=\| Tx_k-x^*\|\le \kappa\|x_k-x^*\|
\leq
\kappa(\|z_k-x^*\|+\|t_k\|)\le \kappa(\|z_k-x^*\|+\delta),
\end{align*}
where $\kappa=\sqrt{1-\gamma\epsilon}<1$. The induction procedure gives us the estimates
\begin{align*}
\|z_{k+1}-x^*\|
&\leq
\kappa^{k+1}\|z_0-x^*\|
+
\kappa\delta(1+\kappa+\cdots+\kappa^k) \\
&\leq
\kappa^{k+1}\|x_0-x^*\|
+
\delta\frac{\kappa}{1-\kappa}
\end{align*}
which consequently yield the inequalities
\[
\limsup_{k\to\infty}\|z_k-x^*\|
\leq
\delta\frac{\kappa}{1-\kappa}\le\delta\frac{1+\kappa}{1-\kappa^2} \le 2\delta \gamma^{-1}\epsilon^{-1}.
\]
It is {obvious} to observe that
\[
x^*\in (A+B)^{-1}(-\epsilon x^*).
\]
Since $(A+B)^{-1}$ is R-continuous at $0$, it follows that
\[
d(x^*,S)
\leq
\rho(\epsilon\|x^*\|)\le\rho(\epsilon a).
\]
Note furthermore that
\begin{align*}
d(x_k,S)
\leq
\|x_k-z_k\|
+
\|z_k-x^*\|
+
d(x^*,S),
\end{align*}
which implies in turn that
$$
\limsup_{k\to\infty} d(x_k,S)\leq
\delta(1+
2\gamma^{-1}\epsilon^{-1})
+
\rho(\epsilon a).
$$
By choosing $\delta=\epsilon^{2}$, we complete the proof of the theorem.
\end{proof}

\section{Concluding Remarks and Future Research}\label{s5}

In this paper, we propose a new approach to the design and justification of {\it practical inexact algorithms} for solving optimization-related problems governed by monotone inclusions in Hilbert spaces. The novel characteristic feature of such algorithms  is incorporating {\it nonsummable errors}, which is the realistic case in practical applications. Our approach is based on the {\it Tikhonov regularization} of monotone inclusions and the usage of quite recent results on {\it R-continuity} of set-valued mappings.

The main attention of this paper is paid to inexact versions of the two algorithms named the {\it Inexact Proximal Point Algorithm}  and the {\it Inexact Tseng Algorithm} in the presence of nonsummable errors, which are more realistic in comparison with the standard summability assumption on errors in computations.

Our analysis shows that the regularization term plays a crucial role in ensuring the stability of the algorithms under bounded perturbations. The proposed technique is sufficiently flexible and can be potentially extended to the study of other important inexact algorithms to solve optimization-related problems in the presence of nonsummable perturbations under bounded-error conditions. This goal constitutes a promising direction for { {\it future research}}.

\end{document}